\newtheorem{claim}{}[section]
\newtheorem{Theorem}[claim]{Theorem}
\newtheorem{Lemma}[claim]{Lemma}
\newtheorem{Proposition}[claim]{Proposition}
\renewenvironment{proof}{\noindent{\it Proof. \hskip0pt}}
                      {$\square$\par\medskip}
\begin{document}
\baselineskip 6.0 truemm
\parindent 1.5 true pc

\title{Notes on extremality of the Choi map}

\author{Kil-Chan Ha}
\address{Faculty of Mathematics and Applied Statistics, Sejong University, Seoul 143-747, Korea}

\date\today

\thanks{partially supported by NRFK 2012-0002600}

\subjclass{15A30, 46L05}

\keywords{Positive semi-definite biquadratic real form, Positive linear map, Convex cone, Extremal}

\begin{abstract}
It is widely believed that the Choi map generates an extremal ray in the cone $\mathcal P(M_3)$ of all positive linear maps between $C^*$-algebra $M_3$ of all $n\times n$ matrices over the complex field. But the only proven fact is that the Choi map generates the extremal ray in the cone of all positive linear map preserving all real symmetric $3\times 3$ matrices. In this note, we show that the Choi map is indeed extremal in the cone $\mathcal P(M_3)$. We also clarify some misclaims about the correspondence between positive semi-definite biquadratic real forms and postive linear maps,  and discuss possible  positive linear maps which coincide with the Choi map on symmetric matrices.
\end{abstract}

\maketitle

\section{Introduction}
\label{}
Let $M_n$ be the $C^*$-algebra of all $n\times n$ matrices over the complex field. Because the convex structure of the positive cone $\mathcal P(M_n)$ of all positive linear maps between $M_n$ is highly complicated even in lower dimensions, it would be very useful to find extreme rays of this cone. Another approach to understand the convex structure of $\mathcal P(M_n)$ is to considered the possibility of decomposition of $\mathcal P(M_n)$ into subcones. For example, a positive linear map between matrix algebra is said to be decomposable if it is the sum of a completely positive linear map and a completely copositive linear map.

In the sixties, it was shown that every positive linear map in $\mathcal P(M_2)$ is decomposable, and all extreme points of the convex set of unital positive linear maps in $\mathcal P(M_2)$ had been found \cite{stormer63}. The first example of indecomposable positive linear map was given by Choi \cite{choi75,choi80}. This Choi map is defined by 
\[
\Phi(X)=
\begin{pmatrix} 
x_{11}+x_{33} & -x_{12} & -x_{13}\\ -x_{21} & x_{22}+x_{11} & -x_{23}\\ -x_{31} & -x_{32} & x_{33}+x_{22}
\end{pmatrix},
\]
where $X=(x_{ij})\in M_3$. It is widely believed \cite{robertson,osaka,kim_kye,sengupta} that the Choi map generates an extremal ray in $\mathcal P(M_3)$. It is not as trivial as one may think. We note that  extremality of some variants of Choi map  can be confirmed from their exposedness \cite{ha_kye,ha_kye2}.
But the only proven fact on the Choi map is that, for $x=(x_1,x_2,x_3)^{\rm t},\, y=(y_1,y_2,y_3)^{\rm t}\in \mathbb R^3$, the corresponding real form $y^{\rm t}\Phi(x x^{\rm t})y$  is extremal in the convex cone of all  positive semi-definite biquadratic  real forms \cite{choi77}.  Let $S_n$ be the real vector space of  all $n\times n$ real symmetric matrices in $M_n$. Then  we see that  the restriction map $\Phi|_{S_3}$ of $\Phi$ is extremal in the cone $\mathcal P(S_3)$ of all positive linear map between $S_3$.

We note that the extremality of $\Phi|_{S_3}$ in the cone $\mathcal P(S_3)$ does not imply the extremality of $\Phi$ in $\mathcal P(M_3)$. To explain this, we consider a positive linear map $\Psi_1$ defined by 
\begin{equation}\label{map_s3}
\Psi_1=\frac 12 (\Phi +\Phi \circ {\rm t}).
\end{equation}
Since $\Phi(S_3)\subset S_3$, we see that $\Psi_1 (S_3)\subset S_3$ and $\Psi_1 |_{S_3}=\Phi|_{S_3}$. Thus the restriction map $\Psi_1|_{S_3}$ is extremal in $\mathcal P(S_3)$. But $\Psi_1$ is not extremal in $\mathcal P(M_n)$. 

The purpose of this note is to clarify this situation. In the next section, we show that the Choi map is indeed extremal in $\mathcal P(M_3)$. In section 3, we explain briefly the correspondence between positive semi-definite real biquadratic forms and positive linear maps, and clarify some misclaims about this correspondence in the literatures. We also discuss  possible extensions of $\Phi|_{S_3}$ to  positive linear maps in $\mathcal P (M_3)$.

Throughout this note, $M_n(\mathbb R)$ denotes the real vector space consisting of $n\times n$ real matrices, and $\{E_{k\ell}\}$ the usual matrix units in $M_n$. For a $n\times n$ matrix $A$, $\text{det}(A)$ denotes the determinant of $A$, and $\text{d}_{k}(A)$ denotes the determinant of the submatrix formed by deleting the $k$-th row and $k$-th column of $A$.

\section{Extremality of the Choi map}
In this section, we show that the Choi map $\Phi$ is extremal in $\mathcal P(M_3)$. Suppose that 
\begin{equation}\label{eq:ext}
\Phi =\phi_1 +\phi_2
\end{equation} 
for $\phi_1,\phi_2\in \mathcal P(M_n)$.  For each $\phi_k \ (k=1,2)$, we define two linear maps $\phi_{k1}, \phi_{k2}$ by 
\[
\phi_{k1}(X)=\frac 1 2 (\phi_k (X)+\overline {\phi_k (X)}),\quad \phi_{k2}(X)=\frac 1 {2i} (\phi_k (X)-\overline {\phi_k (X)}),\quad  X\in M_3
\]
where $\overline {\phi_k (X)}$ denotes the matrix whose entries are conjugates of the corresponding entries of  the matrix $\phi_k(X)$. Then, we see that 
\[
\phi_{k\ell} (M_n(\mathbb R))\subset M_n(\mathbb R)\ (k=1,2 \text{ and } \ell =1,2)
\]
and $\phi_{k1}$ is positive linear map for $k=1,2$. Therefore, both $\phi_{11}$ and $\phi_{21}$ are positive linear maps preserving $S_3$. 

We note that 
\begin{equation}\label{map:phi}
\phi_k=\phi_{k1}+i \phi_{k2}\ (k=1,2)
\end{equation} 
and $\phi (S_3)\subset S_3$. So we see that
\[
\phi|_{S_3}=\phi_{11}|_{S_3}+\phi_{21}|_{S_3}.
\]
Thus, we can conclude that $\phi_{11}|_{S_3}=\lambda \Phi|_{S_3}$ and $\phi_{21}|_{S_3}=(1-\lambda)\Phi|_{S_3}$ for some $0\le \lambda \le 1$ because $\Phi$ is extremal in $\mathcal P(S_3)$.
From this, we have that
\begin{equation}\label{map:dec_phi}
\begin{aligned}
\phi_{11}(E_{11})&=\lambda (E_{11}+E_{22}),&\quad \phi_{21}(E_{11})&=(1-\lambda)(E_{11}+E_{22}),\\
\phi_{11}(E_{22})&=\lambda (E_{22}+E_{33}),&\quad \phi_{21}(E_{22})&=(1-\lambda)(E_{22}+E_{33}),\\
\phi_{11}(E_{33})&=\lambda (E_{33}+E_{11}),&\quad \phi_{21}(E_{33})&=(1-\lambda)(E_{33}+E_{11}),\\
\phi_{11}(S_{12})&=-\lambda S_{12},&\quad \phi_{21}(S_{12})&=-(1-\lambda) S_{12},\\
\phi_{11}(S_{23})&=-\lambda S_{23},&\quad \phi_{21}(S_{23})&=-(1-\lambda) S_{23},\\
\phi_{11}(S_{31})&=-\lambda S_{31},&\quad \phi_{21}(S_{31})&=-(1-\lambda) S_{31},
\end{aligned}
\end{equation}
where $S_{k\ell}=E_{k\ell}+E_{\ell k}$ for $1\le k<\ell \le 3$. 

For each $1\le k<\ell \le 3$, we define hermitian matrices $H_{k\ell}=(E_{k\ell}-E_{\ell k}) i$. Then $M_3$ is generated by $\mathcal B=\{ E_{11}, E_{22},E_{33},S_{12},S_{13},S_{23},H_{12},H_{13},H_{23}\}$.
We note that a positive linear map in $\mathcal P(M_3)$ is uniquely determined by its value on $\mathcal B$. Now,  we examine $\phi_1(X)$ and $\phi_2 (X)$ for each $X\in \mathcal B$ to determine two positive linear maps $\phi_1$, $\phi_2$.
First, we consider the positive semi-definite (PSD) matrices $\phi_k(E_{11}),\, \phi_k(E_{22})$ and $\phi_k (E_{33})$ for $k=1,2$. 
\begin{Lemma}\label{lem:1} $\phi_1 (E_{kk})$ and $\phi_2(E_{kk})$ are of the following form
\begin{equation}\label{eq:phikk}
\begin{aligned}
&\phi_1(E_{11})=\begin{pmatrix} \lambda & a_1 i  & 0\\- a_1 i & \lambda & 0 \\ 0 & 0 & 0\end{pmatrix},\quad 
\phi_2(E_{11})=\begin{pmatrix} 1-\lambda & -a_1 i & 0\\a_1 i & 1-\lambda & 0 \\ 0 & 0 & 0
\end{pmatrix},\\
&\phi_1(E_{22})=\begin{pmatrix} 0 & 0 & 0\\0 & \lambda & a_2 i \\0 &-a_2 i & \lambda \end{pmatrix},\quad 
\phi_2(E_{22})=\begin{pmatrix} 0 & 0 & 0\\0 & 1-\lambda & -a_2 i \\0 & a_2 i &1-\lambda \end{pmatrix},\\
&\phi_1(E_{33})=\begin{pmatrix} \lambda & 0 & a_3 i\\0 & 0 & 0 \\ -a_3 i & 0 & \lambda \end{pmatrix},\quad 
\phi_2(E_{33})=\begin{pmatrix} 1-\lambda & 0 & -a_3 i\\0 & 0 & 0 \\ a_3 i & 0 & 1-\lambda \end{pmatrix}
\end{aligned}
\end{equation}
for real numbers $a_1,\, a_2$ and $a_3$.
\end{Lemma}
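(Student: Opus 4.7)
The plan is to exploit three facts: (i) each $E_{jj}$ is positive semi-definite, so $\phi_k(E_{jj})$ is a Hermitian positive semi-definite matrix; (ii) the decomposition $\phi_k=\phi_{k1}+i\phi_{k2}$ in (\ref{map:phi}) means that the (real) entries of $\phi_{k1}(E_{jj})$ and $\phi_{k2}(E_{jj})$ are exactly the real and imaginary parts of the entries of $\phi_k(E_{jj})$, with $\phi_{k1}(E_{jj})$ already pinned down by (\ref{map:dec_phi}); and (iii) a PSD matrix with a vanishing diagonal entry must have the entire corresponding row and column equal to zero.

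First I would note that since $\phi_k(E_{jj})$ is Hermitian with entries of the form $\phi_{k1}(E_{jj})+i\phi_{k2}(E_{jj})$, with both summands real matrices, the real part $\phi_{k1}(E_{jj})$ is symmetric while the imaginary part $\phi_{k2}(E_{jj})$ is real skew-symmetric. By (\ref{map:dec_phi}) the symmetric parts are already the diagonal matrices written in the lemma, so only three real off-diagonal parameters in each $\phi_k(E_{jj})$ remain unknown.

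Next, for each $j$ the matrix $\phi_{k1}(E_{jj})$ in (\ref{map:dec_phi}) has one zero diagonal entry (at position $(3,3)$, $(1,1)$, $(2,2)$ for $j=1,2,3$ respectively). Since the skew-symmetric piece $\phi_{k2}(E_{jj})$ also has zeros on the diagonal, the same diagonal entry of $\phi_k(E_{jj})$ itself vanishes. Invoking the PSD property of $\phi_k(E_{jj})$ then forces the entire corresponding row and column to vanish, killing four of the six off-diagonal entries of the skew-symmetric part. The sole surviving real parameter is the $(1,2)$-, $(2,3)$-, or $(1,3)$-entry for $j=1,2,3$, which I call $a_j$. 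Assembling the symmetric and skew-symmetric parts reproduces $\phi_1(E_{jj})$ exactly as in the statement.

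Finally, since $\phi_1+\phi_2=\Phi$ and $\Phi(E_{jj})$ is real, the identity $\phi_2(E_{jj})=\Phi(E_{jj})-\phi_1(E_{jj})$ immediately yields the stated expression for $\phi_2(E_{jj})$, with the imaginary parts flipping sign. The one non-routine step is the PSD-with-zero-diagonal observation that cuts down the skew-symmetric part; everything else is bookkeeping directly from the definitions and from the $\mathcal P(S_3)$-extremality underlying (\ref{map:dec_phi}).
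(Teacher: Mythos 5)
Your proposal is correct and follows essentially the same route as the paper: the real parts are pinned down by the $\mathcal P(S_3)$-extremality via \eqref{map:dec_phi}, the vanishing diagonal entry of the PSD matrix $\phi_k(E_{jj})$ forces the corresponding row and column to vanish, and hermiticity leaves a single purely imaginary off-diagonal parameter. The only cosmetic difference is that the paper deduces the zero diagonal entry from $\phi_1(E_{jj})+\phi_2(E_{jj})=\Phi(E_{jj})$ and positivity, whereas you read it off from \eqref{map:dec_phi} plus skew-symmetry of the imaginary part; both are valid.
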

\begin{proof}
Since $(3,3)$-entry of the PSD matrix $\Phi(E_{11})$ is equal to zero, $(3,3)$-entries of both $\phi_1(E_{11})$ and $\phi_2(E_{11})$ are also equal to zero from the equation \eqref{eq:ext} and the positivity of $\phi_k$. Again, the positivity of $\phi_k(E_{11})$ implies that $\phi_k(E_{11})$ is of the form in equation \eqref{eq:phikk}. The rest can be checked similarly.
\end{proof}

Note that $\phi_1 (S_{k\ell})$ and $\phi_2(S_{k\ell})$ are hermitian matrices. So, all diagonal entries of both $\phi_{12}(S_{k\ell})$ and $\phi_{22}(S_{k\ell})$ are equal to zero for all $S_{k\ell}\in \mathcal B$ by the identity \eqref{map:phi} since $\phi_{12}(S_{k\ell})$ and $\phi_{22}(S_{k\ell})$ are real matrices. 
Thus, from \eqref{map:phi} and \eqref{map:dec_phi}, we see that  $\phi_1(S_{k\ell})$ and $\phi_2(S_{k\ell})$ are of the form
\begin{equation}\label{eq:phi_sij}
\begin{aligned}
&\phi_1(S_{12})=\begin{pmatrix} 0 & -\lambda+b_1 i & b_2 i\\-\lambda-b_1 i & 0 & b_3 i\\-b_2 i & -b_3 i &0\end{pmatrix},\\
&\phi_1(S_{13})=\begin{pmatrix} 0 & b_4 i & -\lambda+b_5 i \\-b_4 i & 0 & b_6 i \\-\lambda-b_5 i & -b_6 i & 0\end{pmatrix},\\
&\phi_1(S_{23})=\begin{pmatrix} 0 & b_7 i & b_8 i\\-b_7 i & 0 & -\lambda+b_9 i\\-b_8i & -\lambda-b_9 i &0\end{pmatrix},\\
\end{aligned}
\end{equation}
\begin{equation}\label{eq:phi_sij2}
\begin{aligned}
&\phi_2(S_{12})=\begin{pmatrix} 0 & -1+\lambda-b_1 i & -b_2 i\\-1+\lambda+b_1 i & 0 & -b_3 i\\b_2 i & b_3 i &0\end{pmatrix}\\
&\phi_2(S_{13})=\begin{pmatrix} 0 & -b_4 i & -1+\lambda-b_5 i \\b_4 i & 0 & -b_6 i \\-1+\lambda+b_5 i & b_6 i & 0\end{pmatrix},\\
&\phi_2(S_{23})=\begin{pmatrix} 0 & -b_7 i & -b_8 i\\b_7 i & 0 & -1+\lambda-b_9 i\\b_8i & -1+\lambda+b_9 i &0\end{pmatrix}
\end{aligned}
\end{equation}
for real numbers $b_1,b_2,\cdots,b_9$.

Now, for any $x=(x_1,x_2,x_3)^{\rm t}\in \mathbb C^3$, we define a rank one PSD matrix
\begin{equation}\label{eq:rank1}
X[x_1,x_2,x_3]:=x x^*=(x_k \overline{x_\ell}).
\end{equation} 
From the positivity of $\phi_k(X[x_1,x_2,x_3])$, we can correlate the variables $b_k$'s in \eqref{eq:phi_sij} with the variables $a_{\ell}$'s in \eqref{eq:phikk}.
\begin{Lemma}\label{lem:2} Let $b_1,b_2,\cdots,b_9$ be the variables in \eqref{eq:phi_sij}, and $a_1,a_2,a_3$ be the variables in \eqref{eq:phikk}. Then we have 
\begin{equation}\label{eq:corr}
\begin{aligned}
&b_1=0,\ & &b_2=-a_2,\ & &b_3=-a_3,\\ 
&b_4=a_2,\ &  &b_5=0,\ & &b_6=a_1,\\ 
&b_7=-a_3,\ & &b_8=-a_1,\ & &b_9=0.
\end{aligned}
\end{equation}
\end{Lemma}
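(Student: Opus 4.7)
The plan is to exploit the positivity of $\phi_1(X[x_1,x_2,x_3])$ for real vectors $x \in \mathbb{R}^3$, for which $X[x_1,x_2,x_3]$ reduces to $xx^{\rm t}$. Since $\phi_1|_{S_3} = \lambda\,\Phi|_{S_3}$ by \eqref{map:dec_phi}, the Hermitian matrix $\phi_1(xx^{\rm t})$ can be written as $\lambda\Phi(xx^{\rm t}) + iN(x)$, where $N(x)$ is the real skew-symmetric matrix whose upper-triangular entries, read off directly from \eqref{eq:phikk} and \eqref{eq:phi_sij}, are
\[
N_{12}(x) = a_1 x_1^2 + b_1 x_1 x_2 + b_4 x_1 x_3 + b_7 x_2 x_3,
\]
and the cyclic analogues $N_{13}$ (involving $a_3, b_2, b_5, b_8$) and $N_{23}$ (involving $a_2, b_3, b_6, b_9$). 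The strategy has two prongs: first extract most of the relations from kernel vectors of $\Phi(xx^{\rm t})$, and then eliminate a residual one-parameter freedom using a single $2\times 2$ principal minor.

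For the first prong I would invoke the elementary fact that if $M = R + iN$ is Hermitian positive semidefinite with $R$ real symmetric and $N$ real skew-symmetric, then any real $v$ with $Rv = 0$ also satisfies $Nv = 0$, since $v^*Mv = v^{\rm t}Rv + iv^{\rm t}Nv = 0$ forces $Mv = 0$. A direct computation shows that for each sign pattern $\epsilon \in \{-1,+1\}^3$ the vector $\epsilon$ lies in the kernel of $\Phi(\epsilon\epsilon^{\rm t})$, so applying the fact to $\phi_1(\epsilon\epsilon^{\rm t})$ yields $N(\epsilon)\epsilon = 0$. Multiplying the two independent components by appropriate $\epsilon_i$ rewrites this as $N_{12}(\epsilon)\epsilon_{12} = N_{23}(\epsilon)\epsilon_{23} = -N_{13}(\epsilon)\epsilon_{13}$ where $\epsilon_{ij} := \epsilon_i\epsilon_j$. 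Using $\epsilon_{12}\epsilon_{13}\epsilon_{23} = 1$, each of the two resulting identities takes the form $\alpha u + \beta v + \gamma w + \delta = 0$ with $(u,v,w) = (\epsilon_{12},\epsilon_{13},\epsilon_{23})$. The four essentially distinct sign patterns produce the four evaluation points $(1,1,1)$, $(1,-1,-1)$, $(-1,1,-1)$, $(-1,-1,1)$, on which $1, u, v, w$ are linearly independent, so each coefficient $\alpha, \beta, \gamma, \delta$ must vanish. This gives $b_8 = -a_1$, $b_6 = a_1$, $b_7 = b_3 = -a_3$, $b_4 = a_2$, $b_2 = -a_2$, together with the linked pair $b_5 = -b_1$ and $b_9 = b_1$.

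The main obstacle is that the first prong leaves $b_1$ undetermined, so an extra input is needed. For the second prong I take $x = (1, t, 0)^{\rm t}$ with $t \in \mathbb{R}$ and compute the leading $2\times 2$ principal minor of $\phi_1(xx^{\rm t}) = \phi_1(E_{11}) + t^2\phi_1(E_{22}) + t\phi_1(S_{12})$; from \eqref{eq:phikk} and \eqref{eq:phi_sij} the $(1,1)$, $(2,2)$, and $(1,2)$ entries are $\lambda$, $\lambda(1+t^2)$, and $-t\lambda + (a_1 + tb_1)i$ respectively, so the minor equals
\[
\lambda \cdot \lambda(1+t^2) - \bigl( t^2\lambda^2 + (a_1 + tb_1)^2 \bigr) = \lambda^2 - (a_1 + tb_1)^2.
\]
Positivity of $\phi_1(xx^{\rm t})$ forces this quantity to be nonnegative for every $t \in \mathbb{R}$; if $b_1 \neq 0$ then $(a_1 + tb_1)^2$ is unbounded as $|t|\to\infty$, a contradiction. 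Hence $b_1 = 0$, and the links from the first prong give $b_5 = b_9 = 0$. Assembling both prongs produces exactly the system \eqref{eq:corr}.
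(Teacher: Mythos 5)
Your proof is correct, and for most of the lemma it takes a genuinely different route from the paper's. The paper works entirely with polynomial coefficients: it gets $b_1=b_5=b_9=0$ from the $2\times2$ principal minors $\text{d}_3(\phi_1(X[1,t,0]))$, $\text{d}_2(\phi_1(X[1,0,t]))$, $\text{d}_1(\phi_1(X[0,1,t]))$; gets $b_2=-a_2$, $b_7=-a_3$, $b_6=a_1$ from the leading coefficients of the corresponding $3\times3$ determinants; and finally obtains $b_3,b_4,b_8$ by summing $\det(\phi_k(X[1,\pm1,\pm1]))$ over the four sign patterns, which collapses to $-8\lambda\left((a_1+b_8)^2+(a_2-b_4)^2+(a_3+b_3)^2\right)\ge0$ and its $\phi_2$-counterpart. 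You instead use the kernel vector $\epsilon\in\ker\Phi(\epsilon\epsilon^{\rm t})$ together with the standard fact that $v^*Mv=0$ forces $Mv=0$ for PSD $M$; I checked the details, and evaluated at the same four sign patterns this yields the two identities $(a_1-b_6)u+(b_7-b_3)v+(b_4-a_2)w+(b_1-b_9)=0$ and $(a_1+b_8)u+(a_3+b_7)v+(b_2+b_4)w+(b_1+b_5)=0$, whose coefficients all vanish since $1,u,v,w$ are independent on the four points with $uvw=1$; this delivers six of the nine relations plus the links $b_5=-b_1$, $b_9=b_1$ in one stroke. Your closing $2\times2$ minor is literally the paper's $\text{d}_3(\phi_1(X[1,t,0]))=\lambda^2-(a_1+tb_1)^2\ge0$ step. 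The trade-off: your argument avoids every $3\times3$ determinant computation, uses only $\phi_1$ (the paper needs the $\phi_2$ inequalities to cover the endpoint cases $\lambda=0$ and $\lambda=1$), and treats all $\lambda\in[0,1]$ uniformly; the paper's version is computationally heavier but stays within the single minor-and-determinant technique it reuses throughout Section 2.
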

\begin{proof}
For a real number $t$, we consider two PSD matrices 
\[
\phi_k(X[1,t,0])=\phi_1(E_{11})+t \phi_1(S_{12})+t^2 \phi_1 (E_{22})\quad (k=1,2).
\]
We know that the principal minors $\text{d}_k\left (\phi_\ell(X[1,t,0])\right)$'s are nonnegative. In particular, we have 
\[
\text{d}_{3}\left( \phi_1(X[1,t,0])\right)=-b_1^2 t^2 -2a_1b_1 t+(\lambda^2-a_1^2)\ge 0
\]
for all real number $t$. Thus we get $b_1=0$. Similarily, by considering the principal minors $\text{d}_2\left( \phi_1(X[1,0,t])\right)$ and $\text{d}_1\left (\phi_1(X[0,1,t])\right)$, we can show that $b_5=0$ and $b_9=0$ respectively. 

We see that determinants $\text{det}\left(\phi_k (X[1,t,0])\right)\ (k=1,2)$ are quartic polynomial in $t$ and divisible by $t^2$. Therefore, the coefficients of $t^4$ in $\text{det}\left(\phi_k (X[1,t,0])\right)$ $(k=1,2)$
should be nonnegative. So, we have 
\[
-\lambda (a_2+b_2)^2\ge 0 \ \text{ and } \ -(1-\lambda) (a_2+b_2)^2\ge 0.
\]
Therefore, we can conclude that $b_2=-a_2$.
By the same method, we can show that $b_7=-a_3$ by considering two quartic polynomials $\text{det}\left(\phi_k (X[0,1,t])\right)$ $(k=1,2)$ in variable $t$.

Two quartic polynomials  $\text{det}\left (\phi_\ell(X[1,0,t])\right)$ in $t$ are also divisible by $t^2$. When $b_5=0$, the coefficients of $t^2$ are
\[
-\lambda (a_1-b_6)^2\ \text{ and }\ -(1-\lambda)(a_1-b_6)^2.
\]
Therefore, we get $b_6=a_1$.

Up to now, we have shown that $b_1=b_5=b_9=0$, $b_2=-a_2,\ b_6=a_1$ and $b_7=-a_3$. 
From the equations \eqref{eq:phikk}, \eqref{eq:phi_sij} and \eqref{eq:phi_sij2}, we can compute that
\[
\begin{aligned}
\text{det}&\left (\phi_1( X[1,1,1])\right)+\text{det}\left (\phi_1(X[1,1,-1])\right)+\text{det}\left (\phi_1(X[1,-1,1])\right)\\
&+\text{det}\left (\phi_1(X[1,-1,-1])\right)=-8\lambda\left( (a_1+b_8)^2+(a_2-b_4)^2+(a_3+b_3)^2\right),\\
\text{det}&\left (\phi_2( X[1,1,1])\right)+\text{det}\left (\phi_2(X[1,1,-1])\right)+\text{det}\left (\phi_2(X[1,-1,1])\right)\\
&+\text{det}\left (\phi_2(X[1,-1,-1])\right)=-8(1-\lambda)\left( (a_1+b_8)^2+(a_2-b_4)^2+(a_3+b_3)^2\right).
\end{aligned}
\]
Since the above two values must be nonnegative, we can conclude that 
\[
b_3=-a_3,\ b_4=a_2\ \text{  and }\ b_8=-a_1.
\]
This completes the proof.
\end{proof}

Now, we consider hermitian matrices $\phi_1 (H_{k\ell})$ and $\phi_2(H_{k\ell})$ for $H_{k\ell}\in \mathcal B$. For real numbers $c_i$'s and complex numbers $\alpha_i$'s, we may write $\phi_k(H_{12})$ $(k=1,2)$ as
\[
\phi_1(H_{12})=\begin{pmatrix} c_1 & \alpha_1 & \alpha_2 \\ \overline{\alpha}_1 & c_2 & \alpha_3\\
\overline{\alpha}_2 & \overline{\alpha}_3 & c_3\end{pmatrix}, \quad 
\phi_2(H_{12})=\begin{pmatrix} -c_1 & -i-\alpha_1 & -\alpha_2 \\ i-\overline{\alpha}_1 & -c_2 & -\alpha_3\\
-\overline{\alpha}_2 & -\overline{\alpha}_3 & -c_3\end{pmatrix}.
\]
We consider PSD matrices 
\[
\phi_k(X[1, -t i,0])=\phi_k(E_{11})+t \phi_k(H_{12})+t^2 \phi_k(E_{22}) \quad (k=1,2).
\]
We see that $\text{d}_3\left(\phi_k(X[1,-t i,0])\right)\ (k=1,2)$ are cubic polynomial in $t$, and the coefficients of $t^3$ are
\[
\lambda c_1 \ \text{ and }\ (\lambda -1) c_1.
\]
Therefore, we have $c_1=0$. Then we have the following: 
\[
\begin{aligned}
\text{d}_3\left(\phi_1(X[1,-t i,0])\right)= &\lambda^2-a_1^2+[\lambda c_2-2a_1 \text{Im}(\alpha_1)] t +(\lambda^2-|\alpha_1|^2) t^2 \ge 0 , \\
\text{d}_3\left(\phi_2(X[1,-t i,0])\right)= &(\lambda-1)^2-a_1^2+[(\lambda-1) c_2-2a_1 (1+\text{Im}(\alpha_1))] t  \\ &+[(\lambda-1)^2-\text{Re}(\alpha_1)^2-(1+\text{Im}(\alpha_1))^2] t^2 \ge 0,
\end{aligned}
\]
for all $t\in \mathbb R$, 
where $\alpha_1=\text{Re}(\alpha_1)+\text{Im}(\alpha_1) i$.
So we get the conditions 
\begin{gather}
\lambda^2\ge |\alpha_1|^2 \label{cond1}\\
(\lambda-1)^2\ge \text{Re}(\alpha_1)^2+(1+\text{Im}(\alpha_1))^2 \label{cond2}
\end{gather}
Since $0\le \lambda \le 1$, we see that $\text{Im}(\alpha_1)<0$ by \eqref{cond2}. Then, we get that $-\lambda\le \text{Im}(\alpha_1)<0$ from the condition \eqref{cond1}, that is, $0\le 1-\lambda \le 1+\text{Im}(\alpha_1)<1$. Therefore, we have that
\[
(1-\lambda)^2\le (1+\text{Im}(\alpha_1))^2\le \text{Re}(\alpha_1)^2+(1+\text{Im}(\alpha_1))^2\le (1-\lambda)^2.
\]
Consequently, we conclue $\alpha_1=-\lambda i$. We note that this implies the coefficient of $t$ in $\text{d}_3\left(\phi_1(X[1,-t i,0])\right)$ should be zero. Therefore, we get $c_2=-2a_1$.

When $c_1=0$, 
 $\text{d}_2\left(\phi_k(X[1,-t i,0])\right)\ (k=1,2)$ are quadratic polynomials in $t$ divisible by $t$. Therefore, the coefficients of $t$ should be zero. From this observation, we have that
\[
\lambda c_3=0 \ \text{ and }\ (\lambda-1)c_3=0.
\]
Consequently, we see that $c_3=0$.

Finally, we show that $\alpha_2=a_2$ by considering the determinant of $\phi_k(X[1,-t i,0]) $. Under the conditions $c_1=0$ and $\alpha_1=-\lambda i$, we can show that the determinants $\text{det}\left (\phi_k(X[1,-t i,0]) \right)$ $(k=1,2)$ are quartic polynomials in $t$, and the coefficients of $t^4$ are 
\[
-\lambda |\alpha_2-a_2|^2 \ \text{ and } \ -(1-\lambda)|\alpha_2-a_2|^2.
\]
Since both coefficients should be nonnegative, we get $\alpha_2=a_2$.

To sum up, we have correlated all entries of $\phi_k(H_{12})$ except $\alpha_3$ with $a_i$ in \eqref{eq:phikk}. That is, $c_1=c_3=0,c_2=-2a_1,\alpha_1=-\lambda i$ and $\alpha_2=a_2$.
\begin{Lemma}\label{lem:3} Let $a_1,a_2,a_3$ be the real variables in \eqref{eq:phi_sij}. Then $\phi_1(H_{k\ell})$ and $\phi_2(H_{k\ell})$ are of the following forms
\begin{equation}\label{eq:phi_hij}
\begin{aligned}
&\hskip-0.4truecm \phi_1(H_{12})=\begin{pmatrix} 0 & -\lambda i & a_2 \\ \lambda i & -2a_1 & \alpha\\ a_2 & \overline{\alpha} & 0\end{pmatrix}, & \hskip-0.4truecm
\phi_2(H_{12})=\begin{pmatrix} 0 & (\lambda-1) i & -a_2 \\ (1-\lambda) i &  2a_1 & -\alpha\\ -a_2 & -\overline{\alpha} & 0\end{pmatrix},\\
&\hskip-0.4truecm \phi_1(H_{13})=\begin{pmatrix} -2a_3 &\beta & -\lambda i \\ \overline{\beta} & 0 & -a_1\\ \lambda i & -a_1 & 0\end{pmatrix}, & \hskip-0.4truecm
\phi_2(H_{13})=\begin{pmatrix} 2a_3 & -\beta & (\lambda-1)i \\ -\overline{\beta}&  0 & a_1\\ (1-\lambda)i & a_1& 0\end{pmatrix},\\
&\hskip-0.4truecm \phi_1(H_{23})=\begin{pmatrix} 0 & -a_3& \gamma \\ -a_3 & 0 & -\lambda i \\ \overline{\gamma} & \lambda i & -2a_2\end{pmatrix}, & \hskip-0.4truecm
\phi_2(H_{23})=\begin{pmatrix} 0 & a_3& -\gamma \\a_3 &  0 & (\lambda-1) i \\ -\overline{\gamma} & (1-\lambda) i & 2 a_2\end{pmatrix}.
\end{aligned}
\end{equation}
\end{Lemma}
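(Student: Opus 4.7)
The plan is to observe that the derivation in the paragraphs immediately preceding the lemma has already established the statement for $H_{12}$, and to handle the remaining two cases by the same technique applied to different rank-one PSD test matrices. For $H_{13}$ I would parameterize
\[
\phi_1(H_{13})=\begin{pmatrix} d_1 & \beta_1 & \beta_2 \\ \overline{\beta}_1 & d_2 & \beta_3\\ \overline{\beta}_2 & \overline{\beta}_3 & d_3\end{pmatrix},
\]
with $\phi_2(H_{13})$ determined from the hermitian constraint $\phi_1(H_{13})+\phi_2(H_{13})=\Phi(H_{13})$ exactly as was done for $H_{12}$; similarly for $H_{23}$. Then I would feed in the PSD matrices $\phi_k(X[1,0,-ti])=\phi_k(E_{11})+t\phi_k(H_{13})+t^2\phi_k(E_{33})$ for the $H_{13}$ case, and $\phi_k(X[0,1,-ti])=\phi_k(E_{22})+t\phi_k(H_{23})+t^2\phi_k(E_{33})$ for the $H_{23}$ case.

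For each of these I would inspect principal minors and the full determinant as polynomials in $t$. Exactly as in the $H_{12}$ calculation, the coefficients of $t^3$ in the appropriate $\text{d}_k$'s force the diagonal entry of $\phi_1(H_{13})$ in the ``vanishing'' slot to be zero (giving $d_2=0$ in $\phi_1(H_{13})$ and both outer diagonal entries of $\phi_1(H_{23})$ to vanish). The quadratic-in-$t$ coefficients of the two-by-two principal minors then pin down the off-diagonal entry that couples the two ``nonzero-diagonal'' positions: in the $H_{13}$ case the pair of inequalities of the form $\lambda^2\ge |\beta_2|^2$ and $(\lambda-1)^2\ge \text{Re}(\beta_2)^2+(1+\text{Im}(\beta_2))^2$ forces $\beta_2=-\lambda i$, which in turn makes the linear-in-$t$ coefficient of $\text{d}_2$ vanish and yields the diagonal entry $d_1=-2a_3$; the analogous argument for $H_{23}$ yields the $(3,2)$-entry $-\lambda i$ and the diagonal entry $-2a_2$. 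A further minor condition handles the remaining off-diagonal real entries, giving $-a_1$ in $\phi_1(H_{13})$ and $-a_3$ in $\phi_1(H_{23})$ (these are forced by quartic coefficients exactly as $\alpha_2=a_2$ was in the $H_{12}$ derivation).

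The remaining off-diagonal entries $\alpha,\beta,\gamma$ in the three matrices are left free because the rank-one PSD matrices supported on only two of the three coordinates cannot constrain the entry corresponding to the third coordinate pair; this matches the statement of the lemma, which leaves those entries as parameters to be fixed later.

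The main obstacle is purely notational: the three cases are structurally identical to the $H_{12}$ computation already carried out, but the indexing of ``which entry vanishes'' and ``which entry acquires the $-\lambda i$'' shifts cyclically, so one must be careful that the signs and positions match up with the matrix forms displayed in \eqref{eq:phi_hij}. No new ideas beyond the $H_{12}$ analysis are needed; the work is a disciplined transcription, and the only genuine check is that the choice of test vectors $X[1,0,-ti]$ and $X[0,1,-ti]$ indeed yields the same pattern of coefficient vanishings that powered the $H_{12}$ argument.
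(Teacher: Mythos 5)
Your proposal matches the paper's own proof: the paper treats $H_{12}$ as already done in the preceding paragraphs, then uses exactly the test matrices $X[0,1,-ti]$ (with $\mathrm{d}_3$, $\mathrm{d}_1$ and the determinant) for $H_{23}$ and $X[1,0,-ti]$ (with $\mathrm{d}_3$, $\mathrm{d}_2$ and the determinant) for $H_{13}$, reading off the entries from polynomial coefficients just as you describe. The one bookkeeping point the paper flags and you gloss over is that for $H_{13}$ the minor $\mathrm{d}_2\left(\phi_k(X[1,0,-ti])\right)$ is a quartic divisible by $t^2$, so the $(1,3)$- and $(1,1)$-entries are extracted from the $t^2$ and $t^3$ coefficients rather than the constant and linear ones — a degree shift in the same argument, not a new idea.
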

\begin{proof} For the case of $\phi_k(H_{12})$, we have done it with $\alpha_3=\alpha$. Through the same process with principal minors $\text{d}_3\left( \phi_k(X[0,1,-t i])\right), \text{d}_1\left( \phi_k(X[0,1,-t i])\right)$ and the determinant $\text{det}\left( \phi_k(X[0,1,-t i])\right)$, we can show that $\phi_k(H_{23})$ is of the form in \eqref{eq:phi_hij} for $k=1,2$. 

For the case of $\phi_k(H_{13})$, it suffices to consider  $\text{d}_3\left( \phi_k(X)\right), \text{d}_2\left( \phi_k(X)\right)$ and then $\text{det}\left(\phi_k(X)\right)$ with $X=X[1,0,-t i]$. But, in this case, each $\text{d}_2\left(\phi_k(X)\right)$ is quartic polynomial in $t$ divisible by $t^2$, and we can determine $(1,3)$ and $(1,1)$ entries of $\phi_k(H_{13})$ by examining the coefficients of $t^2$ as in \eqref{cond1} and \eqref{cond2}.
\end{proof}
Now, we are ready to prove that the Choi map is indeed extremal in the cone $\mathcal P(M_3)$. We note that a positive linear map $\psi$ satisfies
\[
\psi(E_{k\ell})=\frac 1 2 \left ( \psi (S_{k\ell})-i \psi (H_{k\ell})\right),\quad
\psi(E_{\ell k})=\frac 1 2 \left ( \psi (S_{k\ell})+i \psi (H_{k\ell})\right)
\]
for $1\le k< \ell \le 3$.
Therefore, positive linear maps $\phi_1$ and $\phi_2$ in \eqref{eq:ext} are uniquely determined by the Lemma~\ref{lem:1}, \ref{lem:2} and \ref{lem:3}. For the convenience of readers, we make up a list of entries of $\phi_1(X)$ and $\phi_2(X)$ for $3\times 3$ matrix $X=(x_{k\ell})$. In the following list, $[A]_{k\ell}$ denotes the entry in the $k$-th row and $\ell$-th column of a matrix $A$.
\begin{equation}\label{phi}
\begin{aligned}
&[\phi_1(X)]_{11}  = (x_{11}+x_{33})\lambda+a_3 (x_{13}-x_{31}) i,\\
&[\phi_1(X)]_{12} = -x_{12}\lambda+a_1 x_{11} i  -a_3 x_{32} i +\frac 12 (a_2-\beta )x_{13}i+\frac 12 (a_2+\beta)x_{31}i, \\
&[\phi_1(X)]_{13} =-x_{13}\lambda+a_3 x_{33}i-a_2 x_{12} i-\frac 12(a_1+\gamma)x_{23}i-\frac 12 
(a_1-\gamma)x_{32}i,\\
&[\phi_1(X)]_{21} = -x_{21}\lambda-a_1 x_{11} i  +a_3 x_{23} i -\frac 12 (a_2-\overline{\beta} )x_{31}i-\frac 12 (a_2+\overline{\beta})x_{13}i, \\
&[\phi_1(X)]_{22} =(x_{22}+x_{11})\lambda+a_1(x_{12}-x_{21})i,\\
&[\phi_1(X)]_{23} =-x_{23}\lambda+a_1 x_{13} i+a_2 x_{22} i-\frac 12 (a_3+\alpha) x_{12} i-\frac 12 (a_3-\alpha)x_{21}i,\\
&[\phi_1(X)]_{31} =-x_{31}\lambda-a_3 x_{33}i+a_2 x_{21} i+\frac 12(a_1+\overline{\gamma})x_{32}i+\frac 12 
(a_1-\overline{\gamma})x_{23}i,\\
&[\phi_1(X)]_{32} =-x_{32}\lambda-a_1 x_{31} i-a_2 x_{22} i+\frac 12 (a_3+\overline{\alpha}) x_{21} i+\frac 12 (a_3-\overline{\alpha})x_{12}i,\\
&[\phi_1(X)]_{33}=(x_{33}+x_{22})\lambda+a_2(x_{23}-x_{32})i,\\
&[\phi_2(X)]_{11}  = (x_{11}+x_{33})(1-\lambda)-a_3 (x_{13}-x_{31}) i,\\
&[\phi_2(X)]_{12} = -x_{12}(1-\lambda)-a_1 x_{11} i  +a_3 x_{32} i -\frac 12 (a_2-\beta )x_{13}i-\frac 12 (a_2+\beta)x_{31}i, \\
&[\phi_2(X)]_{13} =-x_{13}(1-\lambda)-a_3 x_{33}i+a_2 x_{12} i+\frac 12(a_1+\gamma)x_{23}i+\frac 12 
(a_1-\gamma)x_{32}i,\\
&[\phi_2(X)]_{21} = -x_{21}(1-\lambda)+a_1 x_{11} i  -a_3 x_{23} i +\frac 12 (a_2-\overline{\beta} )x_{31}i+\frac 12 (a_2+\overline{\beta})x_{13}i, \\
&[\phi_2(X)]_{22} =(x_{22}+x_{11})(1-\lambda)-a_1(x_{12}-x_{21})i,\\
&[\phi_2(X)]_{23} =-x_{23}(1-\lambda)-a_1 x_{13} i-a_2 x_{22} i+\frac 12 (a_3+\alpha) x_{12} i+\frac 12 (a_3-\alpha)x_{21}i,\\
&[\phi_2(X)]_{31} =-x_{31}(1-\lambda)+a_3 x_{33}i-a_2 x_{21} i-\frac 12(a_1+\overline{\gamma})x_{32}i-\frac 12 
(a_1-\overline{\gamma})x_{23}i,\\
&[\phi_2(X)]_{32} =-x_{32}(1-\lambda)+a_1 x_{31} i+a_2 x_{22} i-\frac 12 (a_3+\overline{\alpha}) x_{21} i-\frac 12 (a_3-\overline{\alpha})x_{12}i,\\
&[\phi_1(X)]_{33}=(x_{33}+x_{22})(1-\lambda)-a_2(x_{23}-x_{32})i
\end{aligned}
\end{equation}
where $a_1,a_2,a_3\in \mathbb R$, $\alpha,\beta,\gamma\in \mathbb C$ and $0\le \lambda \le 1$.

First, we determine real variables $a_1,a_2$ and $a_3$. To do this, we consider principal minors $\text{d}_3\left (\phi_k(X[1,t,s i])\right) \ (k=1,2)$ for real numbers $t$ and $s$, . They are quadratic polynomials in $t$ with the following coefficients of $t^2$.
\[
4\lambda a_3 s+(\lambda^2-a_3^2)s^2,\quad 4(\lambda-1)a_3 s+\left((\lambda-1)^2-a_3^2 \right)s^2
\]
These coefficients should be nonnegative for all $s$ because principal minors of a PSD matrix are nonnegative. Therefore, we have $a_3=0$. 
And then, by considering $\text{d}_2\left(\phi_k(X[t,1, s i])\right)$ as a polynomial in $t$ for each $k=1,2$, we can show that $a_2=0$ similarly. Finally, we get $a_1=0$ from the condition that the coefficients of $t^2$ in quadratic polynomials $\text{d}_1(\phi_k( X[s i, 1, t])) \ (k=1,2)$ should be nonnegative. Consequently, we have 
\begin{equation}\label{eq:a}
a_1=a_2=a_3=0.
\end{equation}

Now, we show that all complex variables $\alpha,\beta$ and $\gamma$ should be zero when $a_1=a_2=a_3=0$ in \eqref{phi}. 

\noindent {\bf (Case 1: $\lambda=0$)}

If $\lambda=0$, then we have PSD matrices
\[
\phi_1(X[1,1,i])=\begin{pmatrix} 0 & -\beta & -\gamma\\-\overline{\beta} & 0 & 0\\ -\overline{\gamma} & 0 & 0\end{pmatrix},\quad 
\phi_1(X[1,i,1])=\begin{pmatrix} 0 & 0 & \gamma\\0 & 0 & -\alpha\\ \overline{\gamma} & -\overline{\alpha} & 0\end{pmatrix}
\]
from \eqref{phi}. Therefore, we have $\alpha=\beta=\gamma=0$ when $\lambda=0$. In this case, we conclude that $\phi_2$ is the Choi map $\Phi$ and $\phi_1$ is the zero map.

\noindent {\bf (Case 2: $\lambda=1$)}

In this case, by considering PSD matrices $\phi_2(X[1,1,i])$ and $\phi_2(X[1,i,1])$, we can show that $\phi_1$ is the Choi map $\Phi$ and $\phi_2$ is the zero map as in (Case 1).

\noindent {\bf (Case 3: $0<\lambda <1$)}

Since the determinant of a PSD is nonnegative, we have the following inequalities:
\begin{equation}\label{alphabeta}
\begin{aligned}
&\text{det}\left(\phi_1(X[1,e^{\frac {\pi}2 i},e^{\frac {\pi}2 i}])\right)+
\text{det}\left(\phi_1(X[1,e^{-\frac {\pi}2 i},e^{-\frac {\pi}2 i}])\right)\\
=&-4\lambda(|\alpha|^2+|\beta|^2)+12\lambda^2 \text{Im}(\beta)\ge 0,\\
&\text{det}\left(\phi_2(X[1,e^{\frac {\pi}2 i},e^{\frac {\pi}2 i}])\right)+
\text{det}\left(\phi_2(X[1,e^{-\frac {\pi}2 i},e^{-\frac {\pi}2 i}])\right)\\
=&-4(1-\lambda)(|\alpha|^2+|\beta|^2)-12(1-\lambda)^2\text{Im}(\beta)\ge 0,\\
\end{aligned}
\end{equation}
These are equivalent to the inequalities
\[
0\le \dfrac{|\alpha|^2+|\beta|^2}{3\lambda}\le \text{Im}(\beta)\le -\dfrac{|\alpha|^2+|\beta|^2}{3(1-\lambda)}\le 0
\]
because of $0<\lambda<1$. Therefore, we see $\text{Im}(\beta)=0$, and so $\alpha=\beta=0$.

Finally, we show that $\gamma=0$ when $a_1=a_2=a_3=0$ and $\alpha=\beta=0$ in \eqref{phi}.
For a PSD matrix $X[1,1,i]$, we get two inequalities
\[
\begin{aligned}
& \text{det}\left(\phi_1(X[1,1,i])\right)=-2\lambda |\gamma|^2+6\lambda^2 \text{Im}(\gamma)\ge 0,\\
& \text{det}\left(\phi_2(X[1,1,i])\right)=-2(1-\lambda)|\gamma|^2-6(1-\lambda)^2\text{Im}(\gamma)\ge 0.
\end{aligned}
\]
Then we see that
\[
0\le \dfrac{|\gamma|^2}{3\lambda}\le \text{Im}(\gamma)\le -\dfrac{|\gamma|^2}{3(1-\lambda)}\le 0.
\]
Thus, we get $\gamma=0$. 
Consequently, we have shown that $\phi_1=\lambda \Phi$ and $\phi_2=(1-\lambda)\Phi$.

In any cases, we have the following.
\begin{Theorem}
The Choi map generates an extreme ray in the cone $\mathcal P (M_3)$.
\end{Theorem}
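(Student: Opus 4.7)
The plan is to assume a decomposition $\Phi = \phi_1 + \phi_2$ with $\phi_1, \phi_2 \in \mathcal P(M_3)$ and show that necessarily $\phi_1 = \lambda\Phi$ and $\phi_2 = (1-\lambda)\Phi$ for some $\lambda \in [0,1]$. Since a positive linear map is determined by its values on $\mathcal B$, Lemmas \ref{lem:1}--\ref{lem:3} have already reduced $\phi_k$ to the explicit parametrization \eqref{phi} involving the real parameters $a_1, a_2, a_3$, the complex parameters $\alpha, \beta, \gamma$, and the scalar $\lambda$ inherited from extremality of $\Phi|_{S_3}$. So what remains is to force all of $a_1, a_2, a_3, \alpha, \beta, \gamma$ to vanish, leaving only the scaling by $\lambda$.

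First I would pin down the real parameters. To isolate $a_3$, I would evaluate $\phi_k$ on the rank one matrix $X[1, t, si]$ and examine the principal minor $\text{d}_3(\phi_k(X[1,t,si]))$ as a quadratic in $t$. Its coefficient of $t^2$ is itself a quadratic in $s$ involving $a_3$, and nonnegativity for all real $s$ (a necessary condition since all principal minors of a PSD matrix are nonnegative) collapses to $a_3 = 0$. Parallel choices $X[t, 1, si]$ and $X[si, 1, t]$ dispatch $a_2$ and $a_1$.

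With $a_1 = a_2 = a_3 = 0$ in hand, I would split on $\lambda$ to eliminate $\alpha, \beta, \gamma$. When $\lambda = 0$ or $\lambda = 1$, a direct look at $\phi_k(X[1,1,i])$ and $\phi_k(X[1,i,1])$ shows these PSD matrices have vanishing diagonal, which forces their off-diagonal entries, and hence $\alpha, \beta, \gamma$, to be zero. In the interior case $0 < \lambda < 1$ I would symmetrize: summing $\det \phi_k(X[1, e^{\pi i/2}, e^{\pi i/2}])$ and $\det \phi_k(X[1, e^{-\pi i/2}, e^{-\pi i/2}])$ cancels the real-part contributions and produces two inequalities squeezing $\text{Im}(\beta)$ between $(|\alpha|^2 + |\beta|^2)/(3\lambda)$ from below and $-(|\alpha|^2 + |\beta|^2)/(3(1-\lambda))$ from above; this forces $\alpha = \beta = 0$. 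A final analogous squeeze using $\det \phi_k(X[1,1,i])$ kills $\gamma$.

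The main obstacle is the interior case $0 < \lambda < 1$, because no single principal minor or determinant gives a sign-definite inequality in $\alpha, \beta, \gamma$. One has to engineer cancellation identities by summing determinants at pairs of complex-conjugate phases in order to expose a single imaginary part sandwiched between bounds of opposite sign. Once one knows to test rank one inputs with phases $e^{\pm \pi i/2}$ and to read off coefficients of pure powers of $t$ in principal minors, the remaining computations are mechanical; but identifying the right linear combinations to form the squeeze is the conceptually nontrivial step that ties the whole argument together.
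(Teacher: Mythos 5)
Your proposal is correct and follows essentially the same route as the paper: after invoking Lemmas \ref{lem:1}--\ref{lem:3} to reach the parametrization \eqref{phi}, you kill $a_1,a_2,a_3$ with the same principal-minor tests on $X[1,t,si]$ and its permutations, handle $\lambda=0,1$ by the vanishing-diagonal observation, and in the interior case use the identical conjugate-phase determinant sums to squeeze $\mathrm{Im}(\beta)$ and then $\mathrm{Im}(\gamma)$. No substantive difference from the paper's argument.
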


\section{Correspondence between positive semi-definite biquadratic real forms and positive linear maps}
We note \cite{choi80} that the Choi map $\Phi$ is originated from a real biquadratic  form $B(x,y)$ with the relation
\[
\begin{aligned}
B(x,y)=&(x_1^2+x_3^2) y_1^2+(x_2^2+x_1^2)y_2^2+(x_3^2+x_2^2)y_3^2\\
 &\hskip 0.7truecm -2(x_1x_2y_1y_2+x_2x_3y_2y_3+x_3x_1y_3y_1)\\
=&y^{\rm t}\Phi(x x^{\rm t})y
\end{aligned}
\]
for real column vector $x=(x_1,x_2,x_3)^{\rm t},y=(y_1,y_2,y_3)^{\rm t}\in \mathbb R^3$.

In general, for any positive real linear map $\phi\,:\,S_n\to S_n$, we get the corresponding positive semi-definite real biquadratic form $B_{\phi}(x,y)$ with $x,y\in \mathbb R^n$ defined by 
$B_{\phi}(x,y)=y^{\rm t}\phi(x x^{\rm t})y$. 

On the other hand, let $B(x,y)$ be a positive sem-definite real biquadratic form with $x,y\in \mathbb R^n$. Then, for any fixed $x\in \mathbb R^n$, $B(x,y)$ is a positive semi-definite real quadratic form with respect to $y\in \mathbb R^n$. So we can write $B(x,y)$ in the form $y^{\rm t} S(x) y$ for some $n\times n$ symmetric matrix $S(x)$. Thus we get a map which take any one dimensional projection $x x^{\rm t}$ to $S(x)$. Consequently, we get the corresponding positive linear map $\phi:S_n\to S_n$ by linearity, which satisfy the relation $B_{\phi}(x,y)=B(x,y)$. Therefore, we see that there is a one-to-one correspondence between the set of positive semi-definite real biquadratic forms and the set $\mathcal P(S_n)$ consisting of positive real linear maps between $S_n$.  

We can find some misclaims on the above correspondence in the literatures \cite{osaka,sengupta}. They claim that 
\begin{itemize}
\item[(i)] Let $\Psi$ be a positive complex linear map with $\Psi (M_n(\mathbb R))\subset M_n(\mathbb R)$. If the corresponding real biquadratic form $B_{\Psi}(x,y)$ is extremal in the cone of all positive semi-definte real biquadratic forms, then $\Psi$ is extremal in the cone $\mathcal P(M_n)$.
\item[(ii)] Using  linearity and hermicity, the above correspondence can be extended to the correspondence between the set of positive semi-definite real biquadratic forms and the set $\mathcal P(M_n)$ trivially.
\end{itemize} 
For the first claim, we have shown there exists a counter example $\Psi_1$ in \eqref{map_s3}. Here, we clarify the claim (ii).

As before, we define $n\times n$ symmetric matrices $S_{k\ell}$ and antisymmetric matrices $A_{k\ell}$ for $1\le k<\ell \le n$ by
\[
S_{k\ell}=E_{k\ell}+E_{\ell k},\quad A_{k\ell}=E_{k\ell}-E_{\ell k}.
\]
We note that $S_n$ is generated by 
\[
\mathcal G=\{E_{kk}:1\le k\le n\}\cup \{S_{k\ell}:1\le k<\ell \le n\}
\]
and real matrix algebra $M_n(\mathbb R)$ is generated by $\mathcal G\cup \{A_{k\ell}:1\le k<\ell\le n\}$. 
Thus, a map $\phi\in \mathcal P(S_n)$ can be extended to a positive linear map $\widetilde \phi$ in $\mathcal P(M_n(\mathbb R))$ by defining $\widetilde\phi(A_{k\ell})$ for $1\le k<\ell \le n$.
It is worthy to note that the positivity of $\widetilde \phi$ is not affected by $\widetilde \phi(A_{k\ell})$'s. That is, the positivity of $\widetilde \phi$ is determined by $\widetilde\phi|_{S_n}=\phi$.

On the other hand, we can uniquely extend $\widetilde\phi\in \mathcal P(M_n(\mathbb R))$ to the complex linear map 
$\widetilde \phi$ between $M_n$ by the  linearity $\widetilde \phi (X+iY)=\widetilde \phi (X)+i\widetilde \phi(Y)$ for $X,Y\in M_n(\mathbb R)$. But, in this extension, the positivity of the complex linear map $\widetilde \phi$ is not determined by the positivity of the real linear map $\widetilde \phi$.

In general, let $\Psi$ be a positive linear map in $\mathcal P(M_n)$ with $\Psi(M_n(\mathbb R))\subset M_n(\mathbb R)$. So, we can regard $\Psi$ as an extension of real linear map. Then, it is well known that $\Psi$ preserve hermitian matrices. That is, $\Psi (i A_{k\ell})$ must be a hermitian matrix.

Since any hermitian matrix $H\in M_n$ can be written by $H=S+i A$ with a symmetric matrix $S\in  M_n(\mathbb R)$ and an antisymmetric matrix $A\in M_n(\mathbb R)$, we observe the following.
\begin{Proposition}\label{prop:hermitian} Let $\Psi$ be a positive linear map in $\mathcal P(M_n)$ with $\Psi(M_n(\mathbb R))\subset M_n(\mathbb R)$. Then $\Psi$ preserve hermitian matrices if and only if $\Psi$ preserve symmetric matrices and antisymmetric matrices respectively.
\end{Proposition}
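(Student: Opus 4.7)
The plan is to exploit the unique decomposition of a Hermitian matrix into a real symmetric part plus $i$ times a real antisymmetric part, which is already hinted at in the sentence immediately preceding the proposition. Concretely, any $H\in M_n$ with $H^*=H$ can be written as $H=S+iA$ where $S=\tfrac12(H+\overline H)\in S_n$ and $A=\tfrac{1}{2i}(H-\overline H)$ is a real antisymmetric matrix, and this decomposition is unique. The whole proof will consist of pushing this decomposition through $\Psi$ and using the hypothesis $\Psi(M_n(\mathbb R))\subset M_n(\mathbb R)$ to keep the real symmetric and real antisymmetric pieces separated.

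For the forward direction, I would start from a real symmetric $S\in S_n$. Since $S$ is already Hermitian, the hypothesis gives that $\Psi(S)$ is Hermitian; since $S\in M_n(\mathbb R)$, also $\Psi(S)\in M_n(\mathbb R)$. A real Hermitian matrix is symmetric, so $\Psi(S)\in S_n$. For a real antisymmetric $A$, observe that $iA$ is Hermitian (since $(iA)^*=-iA^{\rm t}=iA$), hence $\Psi(iA)=i\Psi(A)$ is Hermitian, i.e.\ $(i\Psi(A))^*=i\Psi(A)$. Combined with $\Psi(A)\in M_n(\mathbb R)$, this forces $\Psi(A)^{\rm t}=-\Psi(A)$, so $\Psi$ sends antisymmetric real matrices to antisymmetric real matrices.

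For the backward direction, I would take an arbitrary Hermitian $H=S+iA$ with $S\in S_n$ and $A$ real antisymmetric. By linearity $\Psi(H)=\Psi(S)+i\Psi(A)$. The hypothesis places both $\Psi(S)$ and $\Psi(A)$ in $M_n(\mathbb R)$, and the standing assumption gives $\Psi(S)^{\rm t}=\Psi(S)$ and $\Psi(A)^{\rm t}=-\Psi(A)$. Then
\[
\Psi(H)^*=\overline{\Psi(S)+i\Psi(A)}^{\,\rm t}=\Psi(S)^{\rm t}-i\Psi(A)^{\rm t}=\Psi(S)+i\Psi(A)=\Psi(H),
\]
which gives the conclusion.

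There is really no serious obstacle here; the only subtlety worth flagging is that the paragraph right before the proposition already reminds the reader that positivity in $\mathcal P(M_n)$ automatically forces $\Psi$ to preserve Hermitian matrices, so the content of the proposition is effectively that, \emph{under the reality hypothesis} $\Psi(M_n(\mathbb R))\subset M_n(\mathbb R)$, the Hermitian-preserving property splits cleanly into two independent requirements on $\Psi|_{S_n}$ and on $\Psi$ restricted to real antisymmetric matrices. This is exactly what justifies the discussion preceding the proposition about extending $\phi\in\mathcal P(S_n)$ by freely prescribing $\widetilde\phi(A_{k\ell})$ (subject only to antisymmetry of the output) without spoiling the possibility of obtaining a Hermitian-preserving complex extension.
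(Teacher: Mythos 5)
Your proposal is correct and follows essentially the same route as the paper: both rest on the decomposition $H=S+iA$ and on the reality hypothesis $\Psi(M_n(\mathbb R))\subset M_n(\mathbb R)$, with the paper compressing your two forward-direction cases into the single computation $\Psi(S+iA)^{*}=\Psi(S)^{\rm t}-i\Psi(A)^{\rm t}=\Psi(S)+i\Psi(A)$ and comparing real and imaginary parts. Your write-up merely makes explicit the converse computation that the paper leaves to the reader.
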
 
\begin{proof}
For a symmetric matrix $S$ and an antisymmetric matrix $A$,  we have 
\[
\Psi(S+i A)^{*}=\Psi(S)^{\rm t}-i\Psi(A)^{\rm t}=\Psi(S)+i\Psi(A)=\Psi(S+i A)
\]
since $\Psi$ preserve hermitian matrices and $\Psi(M_n(\mathbb R))\subset M_n(\mathbb R)$. Therefore, 
$\Psi(S)^{\rm t}=\Psi(S)$ and $\Psi(A)^{\rm t}=-\Psi(A)$. This  completes the proof.
\end{proof}

We note that there exists non-positive linear map $\Psi_2$ between $M_3$, which satisfies the following conditions
\begin{itemize}
\item $\Psi_2$ preserves hermitian matrices.
\item $\Psi_2|_{M_3(\mathbb R)}$ is a positive linear map between $M_3(\mathbb R)$ and $\Psi_2|_{S_3}=\Phi|_{S_3}$ for the Choi map $\Phi$.
\end{itemize}
From the condition $\Psi_1|_{S_3}=\Phi|_{S_3}$, $\Psi_2$ is determined by the values $\Psi_2(A_{k\ell})$.
We put
\[
\Psi_2(A_{12})=-A_{12},\ \Psi_2(A_{13})=-A_{13},\ \Psi_2(A_{23})=-A_{12}.
\]
Then, $\Psi_2$ is defined by 
\begin{equation}\label{map:psi2}
\Psi_2(X)=\begin{pmatrix} 
x_{11}+x_{33} & -x_{12}-\frac 12(x_{23}-x_{32}) & -x_{13}\\
-x_{21}+\frac 12(x_{23}-x_{32}) & x_{11}+x_{22} &-\frac 12 (x_{23}+x_{32})\\
-x_{31} & -\frac 12 (x_{23}+x_{32}) & x_{22}+x_{33}
\end{pmatrix}
\end{equation}
for $X=(x_{k\ell})\in M_n$.
We know that $\Psi_2|_{M_3(\mathbb R)}$ is a positive linear map between $M_3(\mathbb R)$ and $\Psi_2|_{S_3}=\Phi_{S_3}$. But, this map is not positive map between $M_3$ because  $\text{det}\left(\Psi_2(X)\right)=-25$ for a PSD matrix $X=X[1,2-i,-1-i]$ in \eqref{eq:rank1}.

Now, we give an example of extremal positive linear map $\Psi_3$ in $\mathcal P(M_3)$, which is not equal to the Choi map $\Phi$ but $\Psi_3|_{S_3}=\Phi|_{S_3}$. This example explains the claim (ii) is nonsense, and we can conclude that extremal extension of $\Phi|_{S_3}$ is not unique.  We define $\Psi_3$ by 
\[
\begin{aligned}
&\Psi_3(X)=\Phi(X) \ \text{ for all } X\in \{E_{11},E_{22},E_{33},S_{12},S_{13},S_{23},A_{12},A_{13}\},\\
&\Psi_3(A_{23})=A_{23}.
\end{aligned}
\]
Then, we obtain a complex linear map $\Psi_3$ preserving hermitian matrices by Proposition~\ref{prop:hermitian}.  We will show that this map is indeed positive. We recall that the positivity of the Choi map is easily proven through the positivity of the real map $\Phi|_{S_3}$ with the following relation:
\[
\Phi(X[x_1,x_2,x_3])
=V \Phi(X[r_1,r_2,r_3]) V^* \ \text{ with } V=\begin{pmatrix}
e^{i \theta_1} & 0 & 0\\0 & e^{i \theta_2} & 0\\0 & 0 & e^{i\theta_3}
\end{pmatrix}
\]
where $x_i=r_i e^{i\theta_i}$ for $i=1,2,3$. But, this method is not applicable to general cases. In fact, it is easy to show that there exists no matrix $V$ satisfying $\Psi_3(X)=V \Psi_3(Y) V^*$ for  rank one PSD matrices $X=X[r_1 e^{i\theta_1},r_2 e^{i\theta_2},r_3 e^{i\theta_3}]\in M_3$ and $Y=X[r_1,r_2,r_3]\in M_3(\mathbb R)$. 

So, we will show that $\Psi_3(X)$ is a PSD matrix for any rank one PSD matrix $X$. Since $\Phi$ is positive, we see that 
\[
\text{d}_k\left(\Psi_3(X)\right)=\text{d}_k\left(\Phi(X)\right) \ge 0
\]
for  any rank one PSD matrix $X=X[x_1,x_2,x_3]$ and $k=1,2,3$.
 By the arithmetic mean-geometric mean inequality, we also see that
\[
\begin{aligned}
\text{det}\left(\Psi_3(X)\right)
=&|x_1|^2|x_3|^4+|x_2|^2|x_1^4|+|x_3|^2|x_2|^4\\
&\hskip 0.8truecm -|x_1|^2|x_2|^2|x_3|^2-2|x_1|^2 \text{Re}(\overline{x}_2^2 x_3^2)\\
\ge & 2|x_1|^2\left(|x_2|^2|x_3|^2 -\text{Re}(\overline{x}_2^2 x_3^2)\right)\ge 0.
\end{aligned}
\]
Therefore, $\Psi_3(X)$ is a PSD matrix for any rank one PSD matrix $X=X[x_1,x_2,x_3]$. That is, $\Psi_3$ is a positive linear map in $\mathcal P(M_3)$. The extremality of $\Psi_3$ in $\mathcal P(M_3)$ is similarly checked as that of the Choi map.

Finally, we discuss extremality for various convex cones of positive linear maps. Let $\Psi$ be a positive linear map preserving $M_n(\mathbb R)$ in $\mathcal P(M_n)$. Then $\Psi|_{M_n(\mathbb R)}$ belongs to $\mathcal P(M_n(\mathbb R))$. Furthermore, we see that $\Psi(S_n)\subset S_n$ from the positivity of $\Psi$ and $\Psi(M_n(\mathbb R))\subset M_n(\mathbb R)$, that is, $\Psi|_{S_n}\in \mathcal P(S_n)$. Now, let $\mathcal P_{\mathbb R}(M_n)$ be the set of all positive complex linear maps preserving $M_n(\mathbb R)$. Then $\mathcal P_{\mathbb R}(M_n)$ is a convex subcone contained in $\mathcal P(M_n)$. Since any real linear map between $M_n(\mathbb R)$ can be uniquely extended to the complex linear map between $M_n$, we may think $\mathcal P_{\mathbb R}(M_n)=\mathcal P(M_n)\cap \mathcal P(M_n(\mathbb R))$.  So, we may consider extremality of a map in each convex cones $\mathcal P(M_n)$, $\mathcal P_{\mathbb R}(M_n)$, $\mathcal P(M_n(\mathbb R))$ and $\mathcal P(S_n)$. We note that the Choi map is extremal in each of these convex cones.

Owing to the map $\Psi_1$ in \eqref{map_s3}, we know that the extremality of $\Psi|_{S_n}$ in $\mathcal P(S_n)$ implies neither the extremality of $\Psi|_{M_n(\mathbb R)}$ in $\mathcal P(M_n(\mathbb R))$ nor the extremality of $\Psi$ in $\mathcal P(M_n)$. 
On the other hand, for $\Psi\in \mathcal P_{\mathbb R}(M_n)$, it is easy to see that the extremality of $\Psi|_{M_n(\mathbb R)}$ in $\mathcal P(M_n(\mathbb R))$ implies the extremality of  $\Psi$ in $\mathcal P_{\mathbb R}(M_n)$. But, it seems that the converse does not hold since there exists a non-positive complex linear map $\Psi$ satisfying two properties  $\Psi(M_n(\mathbb R))\subset M_n(\mathbb R)$ and $\Psi|_{M_n(\mathbb R)}\in \mathcal P(M_n(\mathbb R))$ as a map $\Psi_2$ in \eqref{map:psi2}. So, it is interesting to find an extremal positive linear map $\Psi$ in $\mathcal P_{\mathbb R}(M_n)$ whose restriction $\Psi|_{M_n(\mathbb R)}$ is not extremal in $\mathcal P(M_n(\mathbb R))$.

\end{document}